\newcommand{\N}{\mathbb{N}}
\newcommand{\R}{\mathbb{R}}
\newcommand{\E}{\mathbb{E}}
\newcommand{\C}{\mathbb{C}}
\newif\ifshow
\newtheorem{remark}{\bfseries Remark}
\newtheorem{example}{\bfseries Example}
\newtheorem{theorem}{\bfseries Theorem}
\newcommandx{\unsure}[2][1=]{\todo[linecolor=red,backgroundcolor=red!25,bordercolor=red,#1]{#2}}
\newcommandx{\change}[2][1=]{\todo[linecolor=blue,backgroundcolor=blue!25,bordercolor=blue,#1]{#2}}
\newcommandx{\info}[2][1=]{\todo[linecolor=OliveGreen,backgroundcolor=OliveGreen!25,bordercolor=OliveGreen,#1]{#2}}
\newcommandx{\improvement}[2][1=]{\todo[linecolor=Plum,backgroundcolor=Plum!25,bordercolor=Plum,#1]{#2}}
\newcommandx{\thiswillnotshow}[2][1=]{\todo[disable,#1]{#2}}
\title{\LARGE \bf
Estimating stationary characteristic functions of stochastic systems \\ via semidefinite programming}
\author{Khem Raj Ghusinga$^{1}$, Andrew Lamperski$^{2}$, and Abhyudai Singh$^{1}$
\thanks{$^{1}$ Khem Raj Ghusinga and Abhyudai Singh are with the Department of Electrical and Computer Engineering, University of Delaware, Newark, DE, USA 19716.
        {\tt\small \{khem,absingh\}@udel.edu}}%
\thanks{$^{2}$ Andrew Lamperski  is with the Department of Electrical and Computer Engineering, University of Minnesota, Minneapolis, MN, USA 55455.
        {\tt\small alampers@umn.edu}}
}
\begin{document}

\maketitle
\thispagestyle{empty}
\pagestyle{empty}

\begin{abstract}
This paper proposes a methodology to estimate characteristic functions of stochastic differential equations that are defined over polynomials and driven by L\'evy noise. For such systems, the time evolution of the characteristic function is governed by a partial differential equation; consequently, the stationary characteristic function can be obtained by solving an ordinary differential equation (ODE). However, except for a few special cases such as linear systems, the solution to the ODE consists of unknown coefficients. These coefficients are closely related with the stationary moments of the process, and bounds on these can be obtained by utilizing the fact that the characteristic function is positive definite. These bounds can be further used to find bounds on other higher order stationary moments and also estimate the stationary characteristic function itself. The method is finally illustrated via examples. 
\end{abstract}

\section{Introduction}
A plethora of systems in engineering, finance, and physical sciences exhibit stochastic dynamics \cite{allen07,lande03,malliaris82,gardiner86,oksendal03}. These systems are mathematically characterized in terms of how their probability density functions (or characteristic functions) evolve with time. However, it is generally difficult to solve for these quantities other than a few special cases. One often resorts to stochastic simulations or other approximation schemes \cite{rif96, socha2008linearization, Socha05,hes05,hespanha2006modelling}. Here, we propose a method to estimate stationary characteristic functions for a class of stochastic systems. The state of this class evolves as per a stochastic differential equation driven by both white noise and L\'evy noise. For such systems, a partial differential equation (PDE) governing the time evolution of the characteristic function can be written \cite{gri04,gri13}. In stationary state, this PDE becomes an ordinary differential equation (ODE) whose order is determined by the degrees of polynomials in drift and diffusion terms. The solution to this ODE requires as many unknown coefficients as its order. Our method relies upon relating these coefficients to the stationary moments, and determining the bounds on them by exploiting the positive definiteness of the characteristic function.

Since the proposed method involves estimating some moments of a stochastic system, it is closely related with moment closure methods \cite{KuntzStatMoments16,soltani2015conditional,SinghHespanhaDM}. Specifically, recent works have shown that the positive semidefiniteness of the moments can be used to find exact lower and upper bounds on moments of a stochastic system \cite{lgs16, lgs17, gvl17, kts17}. Here, we show that such bounds can also be obtained by using the fact that the characteristic function is a positive definite function. The proposed method is superior in the sense that it links the moment bounds to approximate the stationary characteristic function and thereby the stationary probability density function. This work is also related with another category of works wherein the characteristic function is used to describe stochastic dynamical systems, including those driven by Poisson/L\'evy noise \cite{epp93, gri96, wjb99, cgk02, gri04,  dgs07, sag07, dhh09, cot11, gri13}. However, these works are typically restricted to cases where the characteristic function can be exactly solved for. Our method approximates the characteristic function beyond these.

The rest of the paper is organized as follows. Section~\ref{sec:background} provides background results on characteristic functions and their relevant properties. Section~\ref{sec:EstimatingCF} describes generators of stochastic dynamical system and proposes our method to estimate moments and characteristic function in stationary state. Section~\ref{sec:examples} illustrates the approach via examples, and finally section~\ref{sec:discussion} concludes the paper.
\paragraph*{Notation}
The set of real numbers, and natural numbers are respectively denoted by $\R$ and $\N$.
The state of a stochastic process is denoted by $X \in \R$, with a specific value taken by it denoted by $x$. 
The squared root of $-1$ is denoted by the letter $j$. The expectation operator is denoted by $\E$. Characteristic function is denoted by $\varphi(\omega)$.

\section{Mathematical Preliminaries}\label{sec:background}
In this section, we provide background results pertaining characteristic functions and their properties. The reader is referred to \cite{Fel682, ush99, biz00,sas13} for proofs and more details. For simplicity, we only consider one dimensional systems.

For a univariate random variable $X$, with distribution function $F$, the characteristic function $\varphi: \R \to \C$ is defined by 
\begin{equation}
\varphi(\omega):=\E\left(e^{j\omega X}\right)=\int_{-\infty}^{\infty}e^{j\omega x} dF(x).
\end{equation}
For any random variable, the characteristic function always exists and it uniquely determines the distribution. If $F(x)$ has a density $h(x)$ then $\varphi(\omega)$ can be written in terms of $h$
\begin{equation}
\varphi(\omega) = \int_{-\infty}^{\infty}e^{j\omega x} h(x) dx.
\end{equation}
Given $\varphi(\omega)$, the distribution function $F(x)$ and/or the density $h(x)$ can be obtained via inversion. 

As an immediate consequence of its definition, the characteristic function of a random variable has the following properties
\begin{enumerate}
\item $\varphi(0)=1$.
\item $\left \lvert \varphi(\omega) \right \rvert \leq 1$.
\item $\varphi(-\omega)=\overline{\varphi(\omega)}$.
\item If the random variable has a finite $m^{th}$ order moment, then it is given by
\begin{equation}\label{eq:cfMom}
\E\left(X^m\right)=(-j)^m \frac{d^m \varphi}{d \omega^m}\lvert_{\omega=0}.
\end{equation}
\end{enumerate}
Another important property of a characteristic function that we particularly use in this work is that it is a positive definite function. 
The following theorem of Bochner forms basis of our analysis. 
\begin{theorem}[Bochner]
A continuous complex-valued function $\varphi$ on $\mathbb{R}$ is a characteristic function if and only if $\varphi(0)=1$ and $\varphi$ is positive definite \cite{ush99, biz00}.
\end{theorem}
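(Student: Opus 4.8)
The plan is to treat the two implications separately. The forward direction is immediate: if $\varphi(\omega)=\E(e^{j\omega X})$ then $\varphi(0)=\E(1)=1$, and for any $\omega_1,\dots,\omega_n\in\R$ and $c_1,\dots,c_n\in\C$ one has $\sum_{k,l} c_k\overline{c_l}\,\varphi(\omega_k-\omega_l) = \E\big(|\sum_k c_k e^{j\omega_k X}|^2\big)\ge 0$, so $\varphi$ is positive definite, with continuity following from dominated convergence. The real content is the converse, and that is where I would spend the effort.

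For the converse, assume $\varphi$ is continuous and positive definite with $\varphi(0)=1$. First I would extract the elementary consequences of the $n=2$ case of positive definiteness: the $2\times2$ Hermitian matrix with diagonal entries $\varphi(0)$ and off-diagonal entries $\varphi(\omega),\varphi(-\omega)$ is positive semidefinite, giving $\varphi(-\omega)=\overline{\varphi(\omega)}$ and $|\varphi(\omega)|\le\varphi(0)=1$. The next step is a reduction to the case in which $\varphi$ is additionally integrable: for $\epsilon>0$, the function $\varphi_\epsilon(\omega):=\varphi(\omega)e^{-\epsilon\omega^2/2}$ is continuous, integrable (bounded times a Gaussian), and still positive definite, because $\omega\mapsto e^{-\epsilon\omega^2/2}$ is a characteristic function (hence positive definite by the forward direction) and the pointwise product of positive definite functions is positive definite (the relevant Gram matrix is a Hadamard product of two positive semidefinite matrices). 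If each $\varphi_\epsilon$ is shown to be the characteristic function of a probability measure $\mu_\epsilon$, then since $\varphi_\epsilon\to\varphi$ pointwise as $\epsilon\downarrow0$ and $\varphi$ is continuous at the origin, L\'evy's continuity theorem produces a probability measure whose characteristic function is $\varphi$.

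It then remains to handle an integrable, continuous, positive definite $\varphi$ with $\varphi(0)=1$. I would define the inversion candidate $h(x):=\frac{1}{2\pi}\int_{-\infty}^{\infty}\varphi(\omega)e^{-j\omega x}\,d\omega$, which is bounded, continuous, and real-valued by $\varphi(-\omega)=\overline{\varphi(\omega)}$. The crucial point is $h\ge0$: introducing the Fej\'er-type average
\begin{equation*}
p_T(x) := \frac{1}{2\pi T}\int_0^T\!\!\int_0^T \varphi(s-t)\,e^{-jx(s-t)}\,ds\,dt,
\end{equation*}
which by the substitution $u=s-t$ equals $\frac{1}{2\pi}\int_{-T}^{T}\big(1-\tfrac{|u|}{T}\big)\varphi(u)e^{-jxu}\,du$, the double integral is a limit of Riemann sums of the form $\sum_{k,l}c_k\overline{c_l}\,\varphi(\omega_k-\omega_l)$ with $c_k=e^{-jx\omega_k}$, hence $p_T(x)\ge0$; and $p_T(x)\to h(x)$ as $T\to\infty$ by dominated convergence since the integrand is dominated by $|\varphi|\in L^1$. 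Thus $h\ge0$. To finish, a Gaussian approximate identity shows $h$ is a density with characteristic function $\varphi$: by Fubini's theorem and the Gaussian Fourier transform, $\int h(x)e^{j\omega x}e^{-\epsilon x^2/2}\,dx$ equals the convolution of $\varphi$ with a normalized Gaussian of variance $\epsilon$, evaluated at $\omega$, which tends to $\varphi(\omega)$ as $\epsilon\downarrow0$ by continuity of $\varphi$; taking $\omega=0$ and using monotone convergence (valid as $h\ge0$) gives $\int h=\varphi(0)=1$, after which dominated convergence identifies $\int h(x)e^{j\omega x}\,dx$ with $\varphi(\omega)$.

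The main obstacle is the nonnegativity $h\ge0$ in the integrable case, since that is the only place positive definiteness is genuinely used; it hinges on the Fej\'er-kernel construction and on passing carefully from the discrete positive-definiteness inequality to its integral form via Riemann sums and continuity of $\varphi$. The remaining ingredients---the Hadamard/Schur product fact, the Fubini interchanges, and L\'evy's continuity theorem---are standard, but the argument only works once they are assembled in the right order.
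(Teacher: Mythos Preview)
Your argument is correct and follows one of the classical routes to Bochner's theorem: the forward direction is the one-line computation $\sum_{k,l}c_k\overline{c_l}\,\varphi(\omega_k-\omega_l)=\E\big|\sum_k c_k e^{j\omega_k X}\big|^2\ge 0$, and for the converse you regularize by a Gaussian factor to reduce to the integrable case, use a Fej\'er-kernel average to pass from the discrete positive-definiteness inequality to $h\ge 0$, recover $\int h=1$ and $\widehat{h}=\varphi$ via a Gaussian approximate identity and Fubini, and finally remove the regularization with L\'evy's continuity theorem. Each of these steps is sound as you have written it; the only place one has to be a little careful is the Riemann-sum justification for $p_T\ge 0$, and continuity of $\varphi$ on the compact square $[0,T]^2$ handles that.

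However, there is nothing to compare against: the paper does not prove this theorem. It is quoted in Section~\ref{sec:background} as a classical background result with citations to \cite{ush99,biz00}, and the surrounding text only records the definition of positive definiteness and a few of its immediate $q=1,2$ consequences. So your proposal goes well beyond what the paper itself supplies. If the goal is merely to match the paper, a reference suffices; if the goal is a self-contained write-up, what you have is a complete and standard proof.
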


The positive definiteness of a function required by this theorem is defined as follows.
A complex valued function $\varphi:\mathbb{R} \to \mathbb{C}$ is said to be positive definite if the inequality
\begin{equation}
\sum_{i,j=1}^{q}\varphi(\omega_i-\omega_j)c_i\overline{c_j} \geq 0
\end{equation}
holds for every positive integer $q$, for all $\omega_1, \ldots, \omega_q \in \mathbb{R}$, and for all $c_1, \ldots, c_q \in \mathbb{C}$. 
In other words, $\varphi$ is positive definite if and only if the matrix
\small{
\begin{equation}\label{eq:M}
M=
\begin{bmatrix}
\varphi(0) & \varphi(\omega_1-\omega_2) & \ldots &  \varphi(\omega_1-\omega_q) \\
\varphi(\omega_2-\omega_1)  & \varphi(0) & \ldots &  \varphi(\omega_2-\omega_q) \\
\vdots & \vdots & \ddots & \vdots \\
\varphi(\omega_q-\omega_1)  & \varphi(\omega_q-\omega_2)  & \ldots &\varphi(0) \\
\end{bmatrix}
\succeq 0.
\end{equation}
}\normalsize
for an arbitrary choice of $q \in \mathbb{N}$ and $\omega_1, \ldots, \omega_q \in \mathbb{R}$. Here $\succeq 0$ denotes the positive semidefiniteness. 

As a consequence of this definition, a variety of properties of the characteristic function (including properties 2 and 3 above) can be established by choosing some test points and enforcing $M \succeq 0$. For example, consider the case $q=1$. Then, we must have that
\begin{equation}
\varphi(0) \geq 0.
\end{equation}
Likewise, for $q=2$, we should have
\begin{equation}
\begin{bmatrix}
\varphi(0) & \varphi(\omega_1 - \omega_2) \\
\varphi(\omega_2-\omega_1) & \varphi(0)
\end{bmatrix}
\succeq 0.
\end{equation}
Without loss of generality, we can assume $\omega_1=0$ and $\omega_2=\omega \in \mathbb{R}$. Then, we should have $\varphi(-\omega) = \overline{\varphi(\omega)}$ and $|\varphi(\omega)|^2 \leq \varphi(0)^2$.

\section{Stationary Characteristic Function of a Stochastic Process} \label{sec:EstimatingCF}
In this section, we describe the generator of a stochastic process and get a PDE for time evolution of the characteristic function. Then, we use this PDE to obtain an ODE for the stationary characteristic function and discuss its solution. 

\subsection{Stochastic dynamical system and its generator}
We consider stochastic differential equations driven by L\'evy
noise, known as \emph{It\^o-L\'evy} processes. In
this paper, we will restrict ourselves to It\^o-L\'evy processes of the form:
\begin{multline}\label{eq:JD}
  dX(t)=f(X(t))dt+g(X(t))dW(t) \\
  + \int_R y \tilde{N}(dt,dy) .
\end{multline}
Here $X \in \R$ denotes the state, $W$ is the Wiener process, 
$\tilde{N}$ is called a \emph{compensated Poisson measure}, and $f:\R \to \R$, $g:\R \to \R$ characterize the dynamics. 
In order for \eqref{eq:JD} to be well-defined we must assume that the
driving L\'evy process has finite variance. 
See \cite{oksendalapplied2009,applebaum2009levy} for more details on the formalism.

The jump process generalizes the jumps of a Poisson process. In a Poisson
process, all of the jumps have value $1$, and $N(t)$ simply counts the
number of jumps. In a L\'evy process, the jumps can take on arbitrary
values. The \emph{Poisson random measure}, $N(t,\cdot)$, is a
measure-valued stochastic process such that for a Borel set $S$, $N(t,S)$ is a Poisson
process that counts the number of jumps that took values in the set
$S$. The intensity of the Poisson random measure given by the \emph{L\'evy
  measure}, $\nu$:
\begin{equation*}
  \E[N(t,S)] = t \nu(S)
\end{equation*}
The compensated Poisson measure from \eqref{eq:JD} is the
measure-valued stochastic process defined by
$\tilde{N}(t,S) = N(t,S) - t \nu(S)$. 

Let $\psi$ be a  twice continuously differentiable function.  A
standard result in stochastic differential equations \cite{oksendalapplied2009} shows that the
dynamics of $\E[\psi(X(t))]$ are given by:

\begin{equation}
\frac{d}{dt} \E[\psi(X(t))] = \E[L\psi(X(t))]
\end{equation}
where $L\psi$ is called the \emph{generator} of the process. The
generator is defined by

\begin{multline}
\label{eq:generator}
L\psi(X)
=
\frac{\partial \psi(X)}{\partial X} 
f(X) 
+\frac{1}{2}
\frac{\partial^2 \psi(X)}{\partial X^2} 
g(X)^2
 + \\
\int_\R\left(\psi(X+y) - \psi(X)-
  \frac{\partial \psi(X)}{\partial X} y
  \right) \nu(dy).
\end{multline}

In the following, we use the generator to find a PDE that governs the
evolution of the characteristic function. We will see that for the
commonly-studied L\'evy processes, formulas exist to enable explicit
calculation of the required integral.

\subsection{Characteristic function of the process}
Consider the stochastic dynamics defined in \eqref{eq:JD}. We restrict
ourselves to the cases for which the functions $f$, and $g$ are
polynomials in the state $X$. Our goal is to compute the time
evolution of
\begin{equation}
\psi(X(t))=e^{j \omega X(t)}.
\end{equation}

The following theorem shows how a partial differential equation governing the evolution of the characteristic function can be obtained for \eqref{eq:JD}. We wish to point out that it is presented here for a formal statement, and it has been used in some form or other in several works, e.g., see \cite{gri13}.

\begin{theorem}\label{thm:ode}
Consider a one dimensional stochastic process defined in \eqref{eq:JD}. Let $f$ and $g^2$ be finite polynomials of degrees $d_f \in \N$ and $d_{g} \in \N$ respectively. Assuming that a stationary distribution exists, the characteristic function of the stationary distribution satisfies the following ordinary differential equation of order $n=\max \{d_f, d_g\}$:
\begin{equation}\label{eq:StODE}
j \omega  \sum_{l=0}^{d_f}a_{f_l} j^{-l} \frac{\partial^l \varphi}{\partial \omega^l} +\frac{1}{2} (j \omega)^2  \sum_{l=0}^{d_g}a_{g_l}  j^{-l} \frac{\partial^l \varphi}{\partial \omega^l}+\eta(\omega)\varphi=0,
\end{equation}
where $\eta(\omega)= \int_\R\left(e^{j\omega y} - 1 -
  j \omega y
  \right) \nu(dy)$.
\end{theorem}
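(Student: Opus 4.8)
The plan is to apply the generator identity to the test function $\psi(X) = e^{j\omega X}$ and use stationarity to kill the time derivative. First I would compute $\frac{\partial \psi}{\partial X} = j\omega e^{j\omega X}$ and $\frac{\partial^2 \psi}{\partial X^2} = (j\omega)^2 e^{j\omega X}$, and substitute these into \eqref{eq:generator}. The drift term becomes $j\omega f(X) e^{j\omega X}$, the diffusion term becomes $\frac12 (j\omega)^2 g(X)^2 e^{j\omega X}$, and the jump term factors as $e^{j\omega X}\int_\R (e^{j\omega y} - 1 - j\omega y)\,\nu(dy) = \eta(\omega)\,e^{j\omega X}$. Taking expectations and using $\frac{d}{dt}\E[\psi(X(t))] = \E[L\psi(X(t))]$ together with the stationarity assumption (so the left side vanishes) gives
\begin{equation*}
\E\!\left[ j\omega f(X) e^{j\omega X} \right] + \tfrac12 \E\!\left[ (j\omega)^2 g(X)^2 e^{j\omega X} \right] + \eta(\omega)\,\E\!\left[ e^{j\omega X} \right] = 0,
\end{equation*}
where the exchange of expectation and the $\nu$-integral is justified by the finite-variance assumption on the driving L\'evy process.

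Next I would convert the polynomial-weighted expectations into derivatives of $\varphi$. Writing $f(X) = \sum_{l=0}^{d_f} a_{f_l} X^l$, the key identity is $\E[X^l e^{j\omega X}] = (-j)^l \frac{\partial^l}{\partial\omega^l}\E[e^{j\omega X}] = j^{-l}\frac{\partial^l \varphi}{\partial\omega^l}$, which is just differentiation under the integral sign (a moment-type identity, cf. property \eqref{eq:cfMom}, valid here because the stationary distribution has finite moments up to the needed order). Applying this termwise to $f(X)$ and to $g^2(X) = \sum_{l=0}^{d_g} a_{g_l} X^l$ turns the first expectation into $\sum_{l=0}^{d_f} a_{f_l} j^{-l}\frac{\partial^l\varphi}{\partial\omega^l}$ and the second into $\sum_{l=0}^{d_g} a_{g_l} j^{-l}\frac{\partial^l\varphi}{\partial\omega^l}$. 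Substituting back yields exactly \eqref{eq:StODE}, and since the highest-order derivative appearing is of order $n = \max\{d_f, d_g\}$, this is an ODE of that order in $\varphi$.

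The main obstacle — really the only nontrivial point — is the justification for differentiating under the integral sign and for interchanging the expectation with the $\nu$-integral in the jump term; both require enough integrability of the stationary distribution and the L\'evy measure. I would handle this by invoking the standing assumption that the driving L\'evy process has finite variance (so $\int_\R y^2\,\nu(dy) < \infty$, making $\eta(\omega)$ well-defined) and the assumption that a stationary distribution exists with finite moments up to order $n$, which is implicit in the problem since otherwise the coefficients $a_{f_l}, a_{g_l}$ could not be paired with convergent moment integrals; under these, dominated convergence gives the required interchanges. The remaining manipulations are purely formal bookkeeping of powers of $j$ and $\omega$.
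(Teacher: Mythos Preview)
Your proposal is correct and follows essentially the same route as the paper's proof: apply the generator to $\psi(X)=e^{j\omega X}$, factor the jump term as $\eta(\omega)e^{j\omega X}$, convert $X^l e^{j\omega X}$ into $j^{-l}\partial_\omega^l$ of the exponential, take expectations, and invoke stationarity to kill the time derivative. The only cosmetic difference is that the paper performs the substitution $X^l\psi = j^{-l}\partial_\omega^l\psi$ at the level of the random variable before taking expectations, whereas you take expectations first and then identify $\E[X^l e^{j\omega X}]$ with $j^{-l}\partial_\omega^l\varphi$; your added remarks on the integrability needed to justify the interchanges are more careful than what the paper provides.
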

\begin{proof} 
Since we assume that $f$, and $g^2$ are polynomials, without loss of generality we can take their forms to be
\begin{equation}
f(X)=\sum_{l=0}^{d_f}a_{f_l} X^{l}, \quad g^2=\sum_{l=0}^{d_g}a_{g_l} X^{l},
\end{equation}
where $a_{f_l} \in \R$, and $a_{g_l} \in \R$ are coefficients. Taking $\psi(X)=e^{j\omega X},\; \omega \in \R$, we have that
\begin{align}\label{eq:derivativesPsi}
\frac{\partial^m \psi}{\partial X^m}=(j\omega)^m\psi, \quad \frac{\partial^m\psi}{\partial \omega^m}=(j X)^m\psi \quad \forall m \in \N
\end{align}

Using these, we can write $L\psi(X)$ as
\begin{multline}
L\psi(X)=j \omega   \sum_{l=0}^{d_f}a_{f_l} X^{l} +\frac{1}{2} (j \omega)^2  \sum_{l=0}^{d_g}a_{g_l} X^{l}
+ \\
\psi(X) \int_\R\left(e^{j\omega y} - 1 -
  j \omega y
  \right) \nu(dy).
\end{multline}
Using \eqref{eq:derivativesPsi}, the terms $X^l \psi$ can be replaced by $(j)^{-l}\frac{\partial^l \psi}{\partial \omega^l}$. This yields
\begin{multline}
L\psi(X)=j \omega  \sum_{l=0}^{d_f}a_{f_l} j^{-l} \frac{\partial^l \psi}{\partial \omega^l} +\frac{1}{2} (j \omega)^2  \sum_{l=0}^{d_g}a_{g_l}  j^{-l} \frac{\partial^l \psi}{\partial \omega^l} 
+ \\
\psi(X) \int_\R\left(e^{j\omega y} - 1 -
  j \omega y
  \right) \nu(dy).
\end{multline}

Taking expectation, we get a partial differential equation in characteristic function $\varphi=\E(\psi(X))$
\begin{multline}\label{eq:pde}
\frac{\partial \varphi}{\partial t}=j \omega  \sum_{l=0}^{d_f}a_{f_l} j^{-l} \frac{\partial^l \varphi}{\partial \omega^l} 
+\\
\frac{1}{2} (j \omega)^2  \sum_{l=0}^{d_g}a_{g_l}  j^{-l} \frac{\partial^l \varphi}{\partial \omega^l} 
+
\eta(\omega) \varphi,
\end{multline}
where we used $\eta(\omega)$ to denote the integral $\int_\R\left(e^{j\omega y} - 1 - j \omega y \right) \nu(dy)$.
If the stationary distribution exists (see \cite{met12} for details), then we must have that $\frac{\partial \varphi}{\partial t}=0$. This results in the ordinary differential equation \eqref{eq:StODE}. 
The degree of this ODE is $n=\max \{d_f, d_g \}$.
\end{proof}
\begin{remark}
  The function $\eta(\omega)$ is known in as the \emph{characteristic
    exponent}, \cite{applebaum2009levy}. A special case of the
  L\'evy-Khintchine formula shows that the compensated Poisson process
  has characteristic function given by:
\begin{equation}
  \E\left[
    e^{j\omega \int_0^ty \tilde{N}(dt,dy)}
    \right] = e^{t\eta(\omega)}. 
  \end{equation}
  Thus, the statistics of the jump process $\int_0^t y
  \tilde{N}(dt,dy)$ are entirely determined by $\eta(\omega)$.
  
  For commonly-studied L\'evy processes, the characteristic exponent
  has an explicit formula. For example, the gamma process $\tau(t)$ has L\'evy
  measure 
  \begin{equation}
    \nu(dy) = ay^{-1} e^{-by} dy, 
  \end{equation}
  where $a$ and $b$ are positive parameters. It can be shown that
  \begin{equation}
    \E[e^{j\omega \tau(t)}] =
    e^{-ta\log\left(1-j\frac{\omega}{b}\right)}
    \quad \textrm{ and } \quad \E[\tau(t)] = t\frac{a}{b}
  \end{equation}
  It follows that the compensated Gamma process, $\tau(t)-\E[\tau(t)]$,
  has characteristic exponent given by
  $-a\log\left(1-j\frac{\omega}{b}\right)-j\omega a/b$.

  A more complex example, which will be studied in an example below is
  the variance-gamma process: $\sigma W(\tau(t))$. This process is
  formed by composing a standard Brownian motion $W(t)$ with a gamma
  process. In this case, it can be shown that
  \begin{equation}
    \E[e^{j\omega W(\tau(t))}] = e^{-ta\log\left(
        1+\frac{(\sigma \omega)^2}{2b}
        \right)} \quad \textrm{and} \quad \E[W(\tau(t))] = 0.
  \end{equation}
  It follows that compensated variance-gamma process is simply the
  variance-gamma process. Furthermore, the characteristic exponent is
  given by
  \begin{equation}
    \label{eq:vgExp}
    \eta(\omega) = - a \log\left(
1+\frac{(\sigma\omega)^2}{2b}
\right),
  \end{equation}
  
\end{remark}
Next, we discuss the solution of the ODE for the stationary characteristic function.

\subsection{Solving for stationary characteristic function}
The ODE in \eqref{eq:StODE} cannot be solved analytically except for a  handful of cases. However, it can be solved via numerical techniques. Either way it would require $n$ initial/intermediate/boundary values to find the solution. Other than the usual $\varphi(0)=1$, previous works have either utilized prior knowledge about the system (e.g., the distribution is symmetric), or used $\lim_{|\omega|\to \infty}\varphi(\omega) = 0$ and $\lim_{|\omega|\to \infty}\frac{\partial^l \varphi(\omega)}{\partial \omega^l} = 0$ for some $l$ \cite{gri13}. In practice these are hard to incorporate in a solution. Furthermore, if one is interested only in stationary moments, then solution of $\varphi(\omega)$ only in neighborhood of zero is sufficient. 

We propose a different approach to compute both the moments and the characteristic function. This approach relies on two ideas. First being the fact that the characteristic function is related with the moments as
\begin{equation}\label{eq:momMu}
 \frac{\partial^l \varphi}{\partial \omega^l}\left |_{\omega=0} \right. = j^{l} \mu_l, \; l=\{1, \ldots, n-1\},
 \end{equation}
where $\mu_l \in \R$ represents the $l^{th}$ order moment. Thus, the moments are natural quantities to be used in computing the characteristic function. Second idea is to utilize the Bochner's theorem to estimate the moments $\mu_l$. In particular, we can use $\varphi(0)=1$ and positive semidefinite property of the matrix $M$ in \eqref{eq:M}. Using these, a semidefinite program can be formulated that gives lower and upper bounds on $\mu_l$ as stated in the theorem below.

\begin{theorem}
 Consider the one dimensional system defined by \eqref{eq:JD}. Assuming that $f$ and $g^2$ are polynomials of degree $d_f$ and $d_g$, a lower bound on a moment $\mu_l$ can be obtained via the semidefinite program
\begin{subequations}\label{eq:SDPsetup}
\begin{align}
& \min \;  \mu_k  \label{eq:SDPsetupOBJ}\\
& j \omega  \sum_{l=0}^{d_f}a_{f_l} j^{-l} \frac{\partial^l \varphi}{\partial \omega^l} +\frac{1}{2} (j \omega)^2  \sum_{l=0}^{d_g}a_{g_l}  j^{-l} \frac{\partial^l \varphi}{\partial \omega^l} + \eta(\omega) \varphi=0, \label{eq:SDPsetupODE}\\
& \frac{\partial^l \varphi}{\partial \omega^l}\left |_{\omega=0} \right. = j^{l} \mu_l, \; \; l={1, 2, \ldots n}, \label{eq:SDPsetupMom}\\
& \varphi(0)=1, \label{eq:SDPsetupNorm}\\
& M \succeq 0.\label{eq:SDPsetupM}
\end{align}
\end{subequations}
Here $k=\{1, \ldots, n\}$ with $n=\max \{d_f, d_g\}$ and $M$ is defined as in \eqref{eq:M}. Further, the minimum value obtained by the program increases as size of $M$ is increased by including more test points.
\end{theorem}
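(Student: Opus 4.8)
The plan is to split the statement into two independent parts --- that the optimal value of \eqref{eq:SDPsetup} is a genuine lower bound on the stationary moment $\mu_k$, and that this value does not decrease when the list of test points $\omega_1,\ldots,\omega_q$ defining $M$ is enlarged --- and to observe that both follow from one structural fact: the true stationary characteristic function is always a feasible point of the program. Concretely, for the lower-bound part I would let $\varphi^\star$ denote the characteristic function of the stationary distribution (which exists by hypothesis) and $\mu_l^\star=\E[X^l]$ its moments. By Theorem~\ref{thm:ode}, $\varphi^\star$ satisfies the ODE constraint \eqref{eq:SDPsetupODE}; by the moment property \eqref{eq:cfMom} (equivalently \eqref{eq:momMu}) it satisfies \eqref{eq:SDPsetupMom} with $\mu_l=\mu_l^\star$; the normalization $\varphi^\star(0)=1$ gives \eqref{eq:SDPsetupNorm}; and, since $\varphi^\star$ is an honest characteristic function, Bochner's theorem makes it positive definite, so the matrix $M$ assembled from $\varphi^\star$ at any finite collection of test points is positive semidefinite, which is \eqref{eq:SDPsetupM}. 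Hence $(\varphi^\star,\mu_1^\star,\ldots,\mu_n^\star)$ is feasible, and since the program minimizes the linear objective $\mu_k$, its optimal value is at most $\mu_k^\star$.

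For the second part, the key observation is monotonicity of the feasible set under the addition of test points. If $\{\omega_1,\ldots,\omega_q\}$ is replaced by a superset $\{\omega_1,\ldots,\omega_q,\omega_{q+1},\ldots,\omega_{q'}\}$, the matrix $M$ for the original points is, up to an irrelevant permutation, a principal submatrix of the matrix $M'$ for the enlarged set; since every principal submatrix of a positive semidefinite matrix is positive semidefinite, $M'\succeq 0$ implies $M\succeq 0$. Thus the constraint \eqref{eq:SDPsetupM} with $M'$ is at least as restrictive, the feasible region weakly shrinks, and minimizing the fixed objective $\mu_k$ over a smaller set can only raise the optimum. Combined with the first part, the optimal values form a monotone non-decreasing sequence bounded above by the true moment $\mu_k^\star$; in particular adding test points never destroys feasibility, which is why ``the minimum increases'' should be read in the weak sense.

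The one genuinely delicate point, in my view, is pinning down the ambient decision space so that \eqref{eq:SDPsetup} is literally a semidefinite program. The variables should be taken to be the $n$ free constants in the general solution of the linear ODE \eqref{eq:StODE} --- equivalently $\mu_1,\ldots,\mu_n$ via \eqref{eq:momMu} --- and, because \eqref{eq:StODE} is linear in $\varphi$, for each fixed $\omega$ the value $\varphi(\omega)$ depends affinely on these constants, so $M$ is an affine matrix-valued function of the decision variables and \eqref{eq:SDPsetupM} is a bona fide linear matrix inequality. Once this framing is fixed, the two assertions above reduce to one line each --- feasibility of $\varphi^\star$ for the bound, and the principal-submatrix property for the tightening --- with no estimate or limiting argument required; the care is all in the setup, not the analysis.
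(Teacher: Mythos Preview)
Your proposal is correct and follows essentially the same approach as the paper: feasibility of the true stationary characteristic function (via Theorem~\ref{thm:ode} and Bochner's theorem) gives the lower bound, and the observation that more test points only add constraints gives the monotonicity. Your argument is in fact more explicit than the paper's on both points---you spell out the feasibility-implies-lower-bound step and invoke the principal-submatrix property for monotonicity, whereas the paper simply notes that adding constraints cannot worsen the optimum---and your discussion of why the dependence of $M$ on the decision variables is affine (making \eqref{eq:SDPsetupM} a genuine LMI) is a welcome clarification that the paper leaves implicit.
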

\begin{proof}
Since $f$ and $g^2$ are assumed to be polynomials, Theorem~\ref{thm:ode} implies that the characteristic function $\varphi(\omega)$ satisfies the ODE of order $n$ given by \eqref{eq:SDPsetupODE}. The moments are related with the derivatives of the characteristic function by the linear constraints in \eqref{eq:SDPsetupMom}. The constraint in \eqref{eq:SDPsetupNorm} and \eqref{eq:SDPsetupM} are a consequence of the Bochner's theorem. Since the objective function is linear in decision variables $\mu_k$ and the constraints are either equality or semidefinite constraints, the optimization problem is a semidefinite program \cite{boyd2004convex}.

Now suppose that the size of $M$ is increased by including more test points $\omega_1, \ldots, \omega_q$. This corresponds to adding more constraints in the program, and the solution cannot get worse by doing so.
\end{proof}

The upper bound on $\mu_k$ can be found by minimizing $-\mu_k$. Note that we can choose any test points $\omega_1, \omega_2, \ldots, \omega_q \in \mathbb{R}$ in order to generate the matrix $M$. For sake of simplicity, we will choose uniformly spaced values on the real-line. 

The above semidefinite program can be used to compute lower and upper bounds on each of the moments $\mu_k$. These values can be then used to determine the solution to the ODE for characteristic function and thereby finding an approximation of the characteristic function. If the lower and upper bounds on each of the moments are reasonably close, then the approximate characteristic function would be quite close to the true characteristic function. It can be further used to compute the stationary probability density via inversion. 

\begin{remark}
  \label{rem:OptControl}
The formulation in \eqref{eq:SDPsetup} can be interpreted as an optimal control problem for a linear time varying system. Specifically, consider a state vector $z=\begin{bmatrix} \varphi & \frac{\partial \varphi}{\partial \omega} & \cdots & \frac{\partial^{n-1} \varphi}{\partial \omega^{n-1}}\end{bmatrix}^\top$. Then, the differential equation describing the characteristic function becomes a linear time (in $\omega$-space) varying system
\begin{equation}
\frac{dz}{d\omega}=\mathcal{P}(\omega) z(\omega),
\end{equation}
for an appropriately defined matrix $\mathcal{P}$. In this setup, the objective would be to optimize the elements of $z(0)$ subject to the linear matrix inequality \eqref{eq:SDPsetupM}. Note that the first element of $z(0)$ is given by $1$.

\end{remark}

\begin{remark}
The semidefinite program in \eqref{eq:SDPsetup} can be used to find bounds on first $n$ moments where $n$ is the degree of the ODE \eqref{eq:SDPsetupODE}. If one is interested in computing the higher order moments, the approximate characteristic function can be differentiated and computed at $\omega=0$. By doing so, bounds on the higher order moments can also be computed. Alternatively, for systems with finite moments, one can compute the bounds on first $n$ moments via the proposed method, and then use the fact that stationary moments are related via a linear system of equations given by \cite{lgs17}
\begin{equation}\label{eq:XXbar}
A \mathcal{X} + B \overline{\mathcal{X}}=0.
\end{equation}
Here $\mathcal{X}$ is collection of moments up to some order and $\overline{\mathcal{X}}$ contains moments of order higher than those in $\mathcal{X}$. The number of elements of $\overline{\mathcal{X}}$ is as many as the degree of nonlinearity in the system given by $n$. While the usual moment closure methods estimate elements of $\overline{\mathcal{X}}$ in terms of those of $\mathcal{X}$, we simply supplement \eqref{eq:XXbar} with lower and upper bounds on $n$ moments and thereby compute bounds on all other moments in \eqref{eq:XXbar}. 
\end{remark}
In the next section, we illustrate the proposed method on some simple examples and verify its performance.

\section{Examples}\label{sec:examples}
\begin{example}[Stochastic Logistic Model]
\begin{figure}
\centering
\includegraphics[width=0.95\linewidth]{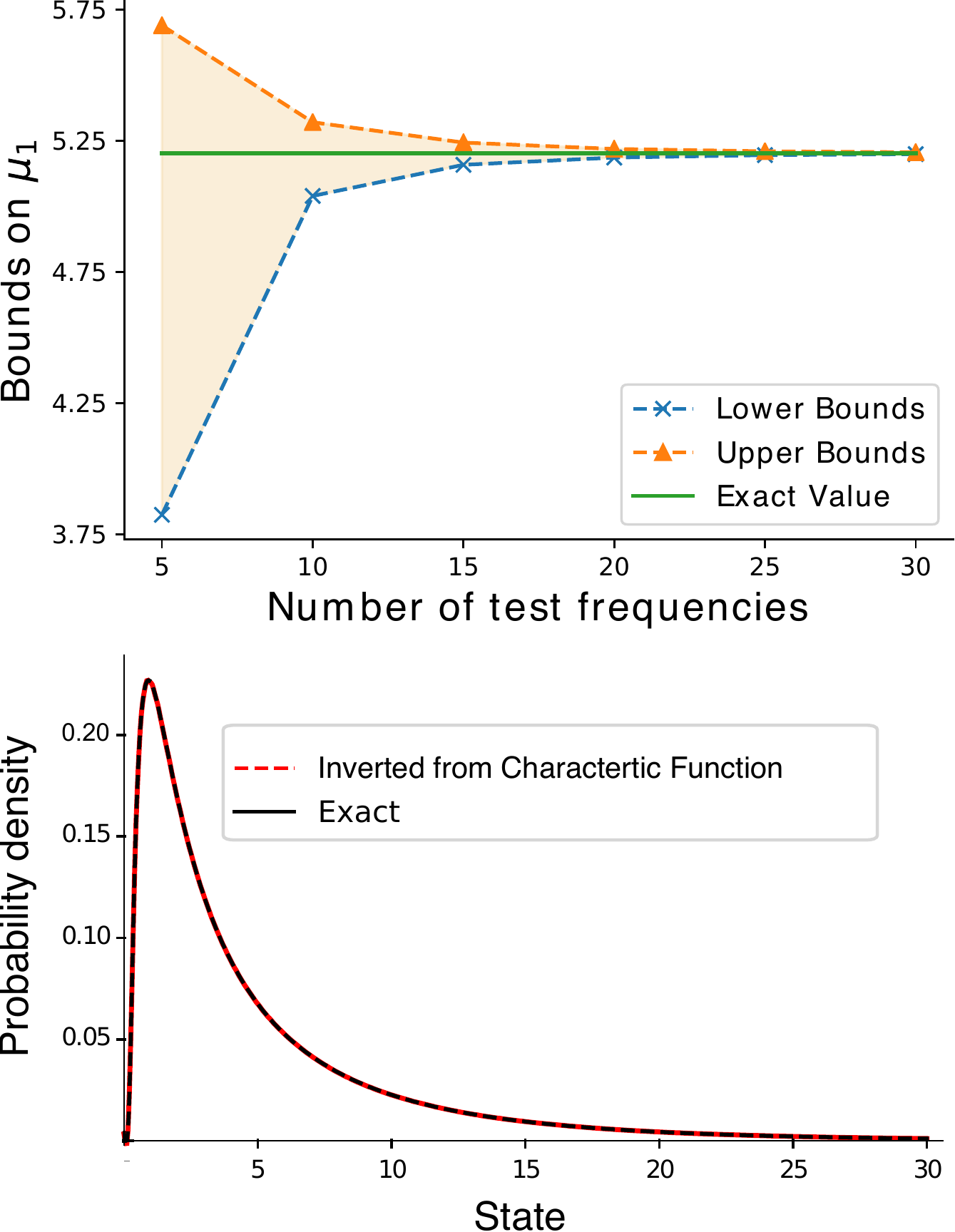}\\
\caption{Comparison of estimated mean and characteristic function with exact values. {\it Top}: The mean $\mu_1$ is estimated via semidefinite program in \eqref{eq:SDPsetup}. The matrix $M$ is constructed by taking frequencies spaced by $one$ (e.g., $\omega_1=1, \omega_2=2, \ldots$). Increasing the number of test frequencies leads to a better estimation of the moment as both lower and upper bounds converge.Estimation of mean via SDP. {\it Bottom}: The characteristic function is estimated by using the middle value of lower and upper bounds for $\mu_1$ obtained from $30$ test frequencies in \eqref{eq:phi}--\eqref{eqn:cfCoeff}. The estimated probability densify function is obtained by numerically inverting the characteristic function. Notably, this particular system is exactly solvable, and the exact probability densify function is provided for comparison purpose, showing an excellent match with its estimated counterpart.}
\label{fig:mu1Logistic}
\end{figure}

Consider the following modified stochastic logistic growth model
\begin{equation}
dX = (1+X-0.1 X^2)dt + \sqrt{2} X dW.
\end{equation}
Without the constant term in the drift, this model is widely-used in modeling population growth \cite{allen07}. We added the constant term so that the trivial solution $X=0$ is ruled out. 

Using \eqref{eq:pde}, the characteristic function evolves as per
\begin{align}
\frac{\partial \varphi}{\partial t}
=
j \omega \varphi + \omega \frac{\partial \varphi}{\partial \omega}+j 0.1 \omega \frac{\partial^2 \varphi}{\partial \omega^2} + \omega^2 \frac{\partial^2 \varphi}{\partial \omega^2}.
\end{align}
Therefore, the stationary characteristic function is the solution to the following differential equation
\begin{align}
j \omega \varphi + \omega \frac{\partial \varphi}{\partial \omega}+\left(j 0.1 \omega  + \omega^2\right) \frac{\partial^2 \varphi}{\partial \omega^2} =0.
\end{align}

It can be shown that the above ODE has the following generalized solution 
\begin{multline}\label{eq:phi}
\varphi (\omega )
= 
\frac{c_1}{\sqrt{5}} I_{0}\left(2 \sqrt{0.1-j \omega} \right)+\frac{c_2}{\sqrt{5}} K_{0}\left(2 \sqrt{0.1-j \omega}\right),
\end{multline}
where $I$ and $K$ denote the modified Bessel functions of first and second kinds, and $c_1$, $c_2$ are unknown coefficients. As expected,  the number of unknown coefficients is same as the order of nonlinearity in the dynamics.
\end{example}

To determine the coefficients, we can use the fact that $\varphi(0)=1$ and $\frac{\partial \varphi}{\partial \omega}\left |_{\omega=0} \right.=j \mu_1$, where $\mu_1 \in \R$ is the mean that is to be determined. This results in 
\begin{subequations}\label{eqn:cfCoeff}
\begin{align}
& \frac{c_1}{\sqrt{5}} I_{0}\left(2 \sqrt{0.1} \right)+\frac{c_2}{\sqrt{5}} K_{0}\left(2 \sqrt{0.1}\right)=1 \\
& -j \sqrt{2} c_1 I_1\left(2\sqrt{0.1}\right)+j \sqrt{2} c_2 K_1\left(2\sqrt{0.1}\right)=j \mu_1
\end{align}
\end{subequations}
Using these, the stationary characteristic function can be written in terms of only one unknown $\mu_1$. Now, we can compute bounds on $\mu_1$ using the semidefinite program as in \eqref{eq:SDPsetup}. By choosing uniformly spaced values of $\omega_1, \omega_2, \ldots, \omega_n$, we computed the maximum and minimum allowable values $\mu_1$. We also find that increasing the size of the program by choosing more test points improves both lower and upper bounds. Taking $30$ test points at $\omega_1=1, \ldots \omega_{30}=30$, we get $5.2024 \leq \mu_1 \leq 5.2025$ (see Fig.~\ref{fig:mu1Logistic}, Top). This result is in excellent agreement with Monte Carlo simulations which yield an value of $5.2$ for $10000$ simulations.

Using the value of $\mu_1$ obtained here, we can use the characteristic function to reconstruct the probability density function of the stationary distribution (see Fig.~\ref{fig:mu1Logistic}, Bottom). As mentioned earlier, the bounds on $\mu_1$ can be used to estimate bounds on higher order moments as well (results not shown here).

\begin{example}[Variance Gamma Process]
\begin{figure}
\includegraphics[width=\linewidth]{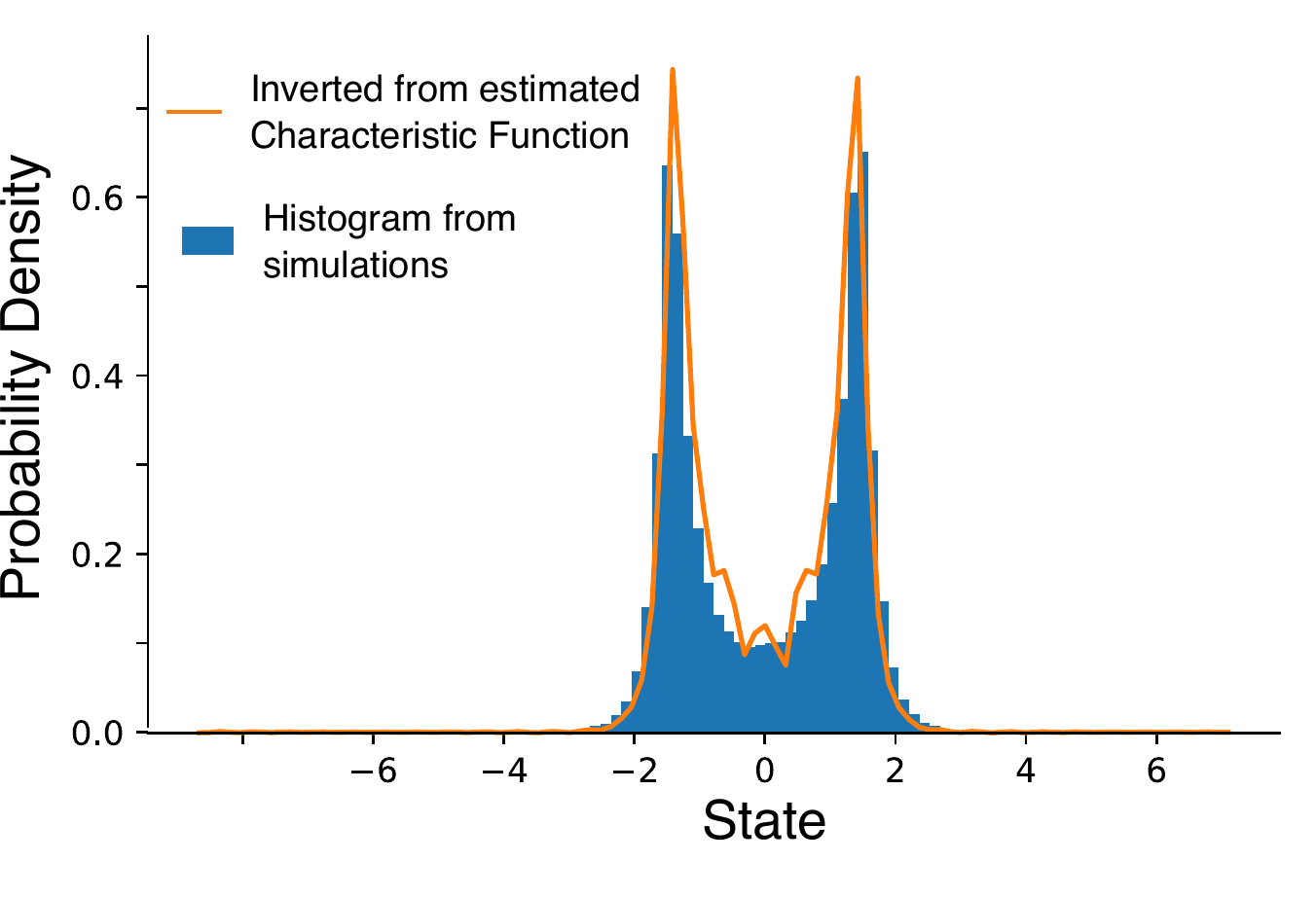}
\caption{Comparison of probability density functions obtained from the proposed method and simulations. The histogram is obtained via $1000$ simulations of the process, whereas the estimated probability density is inverted from the estimated characteristic function by solving the semidefinite program in \eqref{eq:SDPsetup}. The optimization was performed by casting the problem as an optimal control problem described in Remark 2, with evenly spaced $80$ test frequencies.}
\label{fig:levy}
\end{figure}

Consider the following process
\begin{equation}
  dX=(-5X^3+10X)dt+dW(\tau(t)),
\end{equation}
where $W(\tau(t))$ is the variance-gamma process described
above. Recall, in particular, $W(\tau(t))$ has zero mean, and so the
compensated variance-gamma process is the same as the original
variance-gamma process. Furthermore, recall the expression for the
characteristic exponent, $\eta(\omega)$, from \eqref{eq:vgExp}.

Using \eqref{eq:pde}, the characteristic function of this process is governed by
\begin{equation}
\frac{\partial \varphi}{\partial t}=5 \omega^3 \frac{\partial^3 \varphi}{\partial \omega^3}+10 \omega \frac{\partial \varphi}{\partial \omega}+\eta(\omega)\varphi
\end{equation}
As before, this ODE solution will have three unknown coefficients, out
of which two could be readily computed by using the facts that
$\varphi(0)=1$ and the stationary mean value of $X$ is zero due to
symmetry of the process. Numerically solving the optimization problem
that minimizes $\mu_2$ (the second moment), and inverting the solution
yields quite accurate estimate of the stationary probability density
function (see Fig.~\ref{fig:levy}).

The optimization was performed by discretizing the optimal control
problem described in Remark~\ref{rem:OptControl} via the trapezoidal
rule, and then directly solving the corresponding SDP. Here, the
decision variables were the state variables $z(\omega_i)$, where
$\omega_i$ are the discretization frequencies. One subtlety is that in
order to express the matrix $M$ in terms of decision variables, the
frequencies, $\omega_i$, must be evenly spaced.   
\end{example}

\section{Conclusion}\label{sec:discussion}
In this paper, we proposed a method to estimate stationary characteristic function for a class of stochastic dynamical system driven by both white and L\'evy noise.  The characteristic function for these stochastic systems is governed by an ODE. The method relies upon restricting the solutions of the ODE to positive definite functions and casts the problem as a semidefinite program. In future work, we would extend this framework to multidimensional cases. It would also be interesting to explore whether transient solutions of the characteristic function (described via PDE) can be obtained via a similar method.

\section*{Acknowledgment}
AS is supported by the National Science Foundation Grant ECCS-1711548.

\bibliography{ECC18Ref}

\end{document}